\newcommand{\R}{\mathbb R}
\newcommand{\N}{\mathbb N}
\newcommand{\E}{\mathbb E}
\newcommand{\Pro}{\mathbb P}
\newcommand{\SSS}{\ensuremath{{\mathbb S}}}
\DeclareMathOperator{\id}{id}
\newtheorem{thm}{Theorem}[section]
\newtheorem{lemma}[thm]{Lemma}
\newtheorem{proposition}[thm]{Proposition}
\newtheorem{thmalpha}{Theorem}
\newtheorem{rmkalpha}{Remark}
\theoremstyle{definition}
\begin{document}

\title{\bf The minimal spherical dispersion}

\medskip

\author{ 
Joscha Prochno and Daniel Rudolf}



\date{}

\maketitle

\begin{abstract}
\small
We prove upper and lower bounds on the minimal spherical dispersion, improving upon previous estimates obtained by Rote and Tichy [Spherical dispersion with an application to polygonal approximation of curves, Anz. \"Osterreich. Akad. Wiss. Math.-Natur. Kl. 132 (1995), 3--10].  In particular, we see that the inverse $N(\varepsilon,d)$ of the minimal spherical dispersion is, for fixed $\varepsilon>0$,  
linear in the dimension $d$ of the ambient space. We also derive upper and lower bounds on the expected dispersion for points chosen independently and uniformly at random from the Euclidean unit sphere. In terms of the corresponding inverse $\widetilde{N}(\varepsilon,d)$, our bounds are optimal with respect to the dependence on $\varepsilon$.
\medspace
\vskip 1mm
\noindent{\bf Keywords}. {dispersion, expected dispersion, spherical cap, spherical dispersion, VC-dimension}\\
{\bf MSC}. Primary 60D05, 68U05; Secondary 03D15, 51F99, 11K38
\end{abstract}



\section{Introduction and main results}	

In this paper we study the minimal spherical dispersion of point sets on the Euclidean unit sphere $\mathbb{S}^{d}:= \big\{ x\in\mathbb{R}^{d+1}\,:\, \|x \|_2 = 1 \big\}$ with a focus on obtaining bounds that depend simultaneously on the dimension $d+1$ of the ambient space $\R^{d+1}$ and the number $n$ of points. The study of this quantity, which we shall define in a moment, was initiated by Rote and Tichy \cite{RT1995}, extending a concept previously introduced and investigated by Hlawka \cite{H1976} and Niederreiter \cite{N1983}. The motivation comes from typical problems arising in robotics, where one is interested in approximating general curves by simple ones (we refer to \cite{RT1995} for more information). There is also a significant body of research on the dispersion of the $d$-dimensional cube and torus, and so we just refer the reader to the recent work \cite{LL2021} and the references cited therein.

The Euclidean sphere $\SSS^{d}$ comes with a natural Borel probability measure $\pi_d$ given by the normalized Hausdorff measure. This measure is commonly known as the normalized spherical measure or normalized surface measure and corresponds to the uniform distribution on the sphere. A spherical cap $C(x,t)\subseteq \mathbb{S}^d$ with center $x\in \mathbb{S}^d$ and $t\in [-1,1]$ is given by
\[
C(x,t) := \Big\{y\in\mathbb{S}^d\,:\, \langle x,y \rangle >t \Big\},
\]
so it is simply an open half-space intersected with the sphere. The collection of spherical slices in $\R^{d+1}$, which we shall denote by $\mathcal{S}_d$, is given by intersecting two open half-spaces then intersected with the sphere. This can be expressed in terms of spherical cap intersections, more precisely, 
\[
\mathcal{S}_d := \Big\{ C(x,t)\cap C(y,s)\,:\,  x,y\in\mathbb{S}^d,\,s,t\in [-1,1]  \Big\}.
\]
As already mentioned, we are interested in the minimal spherical dispersion, i.e., the minimal dispersion with respect to the test set of spherical slices. To define this quantity, we first introduce the spherical dispersion of an $n$-element point set $\mathcal P_n:=\{x_1,\dots,x_n\}\subseteq \mathbb{S}^d$, which is given by
\[
	{\rm disp}\big(\mathcal P_n;d\big) := \sup_{B\in \mathcal{S}_d\atop{ \mathcal P_n \cap B =\emptyset}} \pi_d(B).
\]
This is the largest (in the sense of the normalized surface measure) spherical slice not containing any point of $\mathcal P_n$. For $d,n\in\N$, the \emph{minimal} spherical dispersion is defined as
\[
	{\rm disp}^{*}(n,d) := \inf_{\mathcal P\subseteq\SSS^d\atop{\#\mathcal P=n}} {\rm disp}\big(\mathcal P;d\big),
\]
i.e., the infimum of the dispersion over all possible point sets on $\SSS^d$ with cardinality $n$.

The work of Rote and Tichy already contains both lower and upper bounds on the spherical dispersion. However, the focus of their paper is on the dependence on the number $n$ of points, while we are aiming for simultaneous control in the number of points and the dimension of the ambient space; their upper bound is of the form $\mathcal{O}(d^3/n)$ and their lower bound of the form $\mathcal{O}(n^{-1})$.
We derive lower bounds depending not just on the number of points $n$, but also on the dimension $d$, and upper bounds improving upon the dimensional dependence $\mathcal{O}(d^3/n)$. More precisely, we shall prove the following results.

\begin{thmalpha}[Lower bounds]\label{thm:minimal spherical dispersion}
Let $d\in\N$. Then, for any $n \in\mathbb{N}$,
	\[
	{\rm disp}^*\big(n,d\big) \geq 
	\begin{cases}
	1/2 &: n\leq d+1 \\
	1/4 &: d+1<n\leq 2d+1 \\
	\frac{1}{2n-4d+4} &: 2d+1\leq n<3d-2\\
	\frac{1}{n-d+2} &: n\geq 3d-2.
	\end{cases}		
	\] 
\end{thmalpha}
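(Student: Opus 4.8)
The plan is to work throughout with \emph{central} slices, that is, wedges $W(v,w):=C(v,0)\cap C(w,0)$ obtained by taking $t=s=0$. These form a subfamily of $\mathcal S_d$, so writing $\mathrm{wdisp}^*(n,d)$ for the minimal dispersion when only wedges are admitted as test sets, we have $\mathrm{wdisp}^*(n,d)\le\mathrm{disp}^*(n,d)$ and it suffices to bound the former from below. Two elementary facts drive everything. First, the measure of a wedge depends only on the angle $\alpha=\angle(v,w)$ between the normals, namely $\pi_d\big(W(v,w)\big)=\tfrac{\pi-\alpha}{2\pi}$, and crucially this value is \emph{independent of the ambient dimension}. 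Second, $k\le m$ vectors never positively span $\mathbb R^{m}$, so they always admit a nonzero $u$ with $\langle u,\cdot\rangle\le 0$ on all of them. I would then reduce each case to forcing, for every configuration, the existence of an empty wedge whose angle is suitably small.

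The regimes $n\le d+1$ and $n\le 2d+1$ follow directly. For the first, the $n\le d+1$ points cannot positively span $\mathbb R^{d+1}$, so there is a unit vector $v$ with $\langle v,x_i\rangle\le 0$ for all $i$; then $C(v,0)$ is an empty hemisphere of measure $1/2$. For the second, note that for any $v$ the open hemispheres $C(v,0)$ and $C(-v,0)$ are disjoint, so one of them encloses at most $\lfloor n/2\rfloor\le d$ points; fixing such a $v$, these $\le d$ enclosed points project into $v^\perp\cong\mathbb R^{d}$ to vectors that cannot positively span it, whence there is $w\perp v$ with $\langle w,x_i\rangle\le0$ whenever $\langle v,x_i\rangle>0$. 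The wedge $W(v,w)$ is then empty with $\alpha=\pi/2$ and measure $1/4$.

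For the regime $n\ge 3d-2$ I would use dimension reduction. Pick $v=x_n$ and project the remaining points radially onto the equator $E=v^\perp\cap\mathbb S^d\cong\mathbb S^{d-1}$. Since any wedge with normals in $v^\perp$ contains a point iff it contains the point's projection, and since it automatically excludes $\pm v$, an empty wedge among the $\le n-1$ projected points on $\mathbb S^{d-1}$ lifts to an empty wedge of the \emph{same} measure on $\mathbb S^{d}$; as $\mathrm{wdisp}^*$ is non-increasing in the number of points, this yields $\mathrm{wdisp}^*(n,d)\ge\mathrm{wdisp}^*(n-1,d-1)$. Iterating down to the circle gives $\mathrm{wdisp}^*(n,d)\ge\mathrm{wdisp}^*(n-d+1,1)$, and on $\mathbb S^1$ any $n-d+1$ points leave an empty arc of measure at least $1/(n-d+1)\ge 1/(n-d+2)$, which is the claimed bound (in fact slightly stronger).

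The intermediate regime $2d+1\le n<3d-2$ is the crux and the hard part of the plan: the plain reduction degrades the bound when iterated, and the orthogonal-killer argument breaks down because a balanced hemisphere may enclose more than $d$ points, which no single half-space can all avoid. I would instead keep one cap equal to a balanced hemisphere $C(v,0)$, which excludes the $\ge\lceil n/2\rceil$ outer points for free, and then open the wedge past orthogonality by taking $w=\cos\alpha\,v+\sin\alpha\,u$ with $\alpha>\pi/2$, so that the shrinking wedge also excludes the enclosed points. The exclusion condition reads $\langle u,x_i\rangle\le-\cot\alpha\,\langle v,x_i\rangle$, and the minimal admissible $\alpha$ has to be controlled uniformly by a positive-spanning count in $v^\perp$ that trades the excess of enclosed points over $d$ against the opening angle. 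Making this trade-off quantitative, so that it produces exactly the measure $\tfrac{1}{2(n-2d+2)}$ and matches the neighbouring regimes at the endpoints $n=2d+1$ and $n=3d-2$, is where I expect the genuine difficulty to lie.
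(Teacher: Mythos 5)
Your arguments for the regimes $n\le d+1$, $d+1<n\le 2d+1$, and $n\ge 3d-2$ are correct, and they take a genuinely different route from the paper. For the first two regimes the paper passes one (respectively two) hyperplanes through the origin and $d$ of the points and works with the resulting caps and slices, whereas your positive-spanning/separation argument reaches the same constants $1/2$ and $1/4$ more directly. For $n\ge 3d-2$ the paper rotates the hyperplane through $x_1,\dots,x_d$ about a codimension-two subspace to split a hemisphere into $\lfloor (n-d)/2\rfloor+1$ equal slices; your radial projection onto $x_n^\perp$, combined with the dimension-free lune formula $\pi_d(W(v,w))=\frac{\pi-\alpha}{2\pi}$, is a clean alternative that even yields the marginally stronger bound $\frac{1}{n-d+1}$ (modulo the degenerate case of a single point on $\mathbb{S}^1$, where one still gets $1/2$).

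The genuine gap is the regime $2d+1\le n<3d-2$ with bound $\frac{1}{2n-4d+4}$, which you correctly flag as unresolved. Your iterated reduction gives only $\frac{1}{n-d+1}$, which is strictly weaker than $\frac{1}{2n-4d+4}$ exactly when $n<3d-3$, so it cannot close this case. Moreover, the obtuse-wedge repair you sketch cannot be made uniform: if the balanced hemisphere $C(v,0)$ encloses $k>d$ points whose projections to $v^{\perp}$ positively span $v^{\perp}$ and which sit at height $\langle v,x_i\rangle=\epsilon$ just above the equator, then for every unit $u\in v^{\perp}$ one has $\max_i\langle u,x_i\rangle\ge c\sqrt{1-\epsilon^2}$ for some configuration constant $c>0$, so the exclusion condition $\langle u,x_i\rangle\le-\cot\alpha\,\langle v,x_i\rangle$ forces $-\cot\alpha\ge c\sqrt{1-\epsilon^2}/\epsilon$, hence $\alpha\to\pi$ and wedge measure tending to $0$ as $\epsilon\to 0$. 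No bound of order $1/d$ (note $\frac{1}{2n-4d+4}>\frac{1}{2d}$ in this regime) can come out of a single balanced cap.

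The idea you are missing is to spend \emph{two} hyperplane incidences rather than one. Take $H_1$ through the origin and $x_1,\dots,x_d$ and $H_2$ through the origin and $x_{d+1},\dots,x_{2d}$. The four open slices cut out by $H_1,H_2$ all avoid $x_1,\dots,x_{2d}$; since the two slices on either side of $H_1$ have total measure $1/2$, and likewise on the other side, one finds two \emph{disjoint} slices $S_1,S_2$ each of measure at least $1/4$. One of them contains at most $\lfloor (n-2d)/2\rfloor$ of the remaining $n-2d$ points, and rotating one of its bounding hyperplanes about the $(d-1)$-dimensional subspace $H_1\cap H_2$ splits it into $\lfloor (n-2d)/2\rfloor+1$ equal-measure slices (each still an intersection of two caps), one of which is empty and has measure at least $\frac{1}{4(\lfloor (n-2d)/2\rfloor+1)}\ge\frac{1}{2n-4d+4}$. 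In short: absorbing $2d$ points costs only a factor $2$ in measure, which is the right trade-off in this regime, whereas trying to absorb more than $d$ points with a single cap costs an uncontrolled amount.
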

\begin{figure}
	\begin{center}
		\includegraphics[height=8cm]{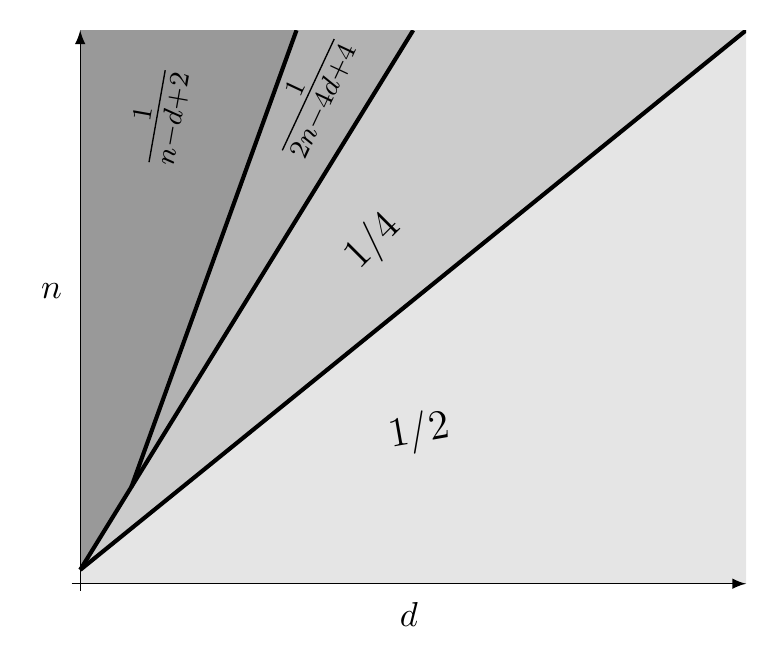}
		\label{fig}
		\caption{Visualization of lower bounds in Theorem~\ref{thm:minimal spherical dispersion} in dependence on the relation of $n$ and $d$.}
	\end{center}
\end{figure}

\begin{rmkalpha}\label{rem:inverse thm a}
It is instructive to reformulate Theorem \ref{thm:minimal spherical dispersion} in terms of the inverse of the minimal spherical disperison. This quantity is defined as 
\[
	N(\varepsilon,d) := \min\big\{n\in\mathbb{N}\,:\, {\rm disp}^{*}(n,d)<\varepsilon \big \}.
\]
We obtain from Theorem \ref{thm:minimal spherical dispersion} that, for any $\varepsilon\in(0,1/4)$,
\[
N(\varepsilon,d) \geq 
\max\left\{\frac{1}{\varepsilon}+d-2,\,\frac{1}{2\varepsilon}+2d-2\right\}.\]
It shows that for fixed $\varepsilon\in(0,1/4)$ the inverse minimal spherical dispersion grows linearly with respect to the dimension $d$.
\end{rmkalpha}

The proof relies on the following ideas. 
The lower bound is based on a test set expansion procedure and a suitable choice of appropriate hyperplanes that yield ``good'' spherical slices. For deriving our upper bound, we use the fact that the minimal dispersion is always smaller than the expected dispersion, in formulas,
\begin{equation} \label{eq: min_smaller_expec}
	{\rm disp}^*\big(n,d\big) \leq \E\big[{\rm disp}\big(\mathcal P_n;d\big)\big],
\end{equation}
where $\mathcal P_n := \{X_1,\dots,X_n \}$ and $X_1,\dots,X_n$ is an iid sequence of uniformly distributed random variables on the sphere $\SSS^d$. This approach is related to the recent work \cite{HKKR2019} (see comments after Theorem \ref{thm:expected spherical disp}), in which the authors study the expected dispersion of random point sets whose elements are uniformly distributed on the cube. Eventually, any upper bound on the expected dispersion leads to an upper bound of the minimal dispersion. The following theorem provides such an upper bound together with a lower bound and it is interesting to mention that the dependence on $n$ cannot be improved since there is a corresponding lower bound.  Our result reads as follows.

\begin{thmalpha}\label{thm:expected spherical disp}
Let $n,d\in\N$ with $n\geq 32d$. Assume that $X_1,\dots,X_n\in \SSS^d$ are independent random vectors chosen with respect to $\pi_d$. Then
\[
\frac{1}{9}\,\frac{\log(n)}{n}\leq \E\big[{\rm disp}\big(\mathcal P_n;d\big)\big] \leq \frac{64}{\log2} \frac{d}{n}  \log\left(	\frac{e n}{32 d}	\right),
\]  
where $\mathcal P_n := \{X_1,\dots,X_n \}$. 
\end{thmalpha}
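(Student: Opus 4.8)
The two inequalities need different tools, so I would prove them separately, and the lower bound is the dimension-free one. The crucial structural observation is that $\mathcal S_d$ contains every \emph{zone} between two parallel hyperplanes: choosing $y=-x$ in the definition of a slice gives $C(x,t)\cap C(-x,s)=\{z\in\SSS^d:\,t<\langle x,z\rangle<-s\}$. Fixing a direction $x$, the pushforward of $\pi_d$ under $z\mapsto\langle x,z\rangle$ is atomless, so I can pick thresholds $-1=a_0<\dots<a_M=1$ for which the disjoint zones $B_i=\{z:\,a_{i-1}<\langle x,z\rangle<a_i\}$ satisfy $\pi_d(B_i)=1/M$ and $B_i\in\mathcal S_d$. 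Hence $\E[\disp(\mathcal P_n;d)]\ge \frac1M\,\pr{\exists\,i:\,\mathcal P_n\cap B_i=\emptyset}$. Writing $Z$ for the number of empty zones, the cell counts are multinomial and one checks directly that $(1-2/M)^n\le(1-1/M)^{2n}$, so the empty-zone events are pairwise non-positively correlated and $\Var(Z)\le\E[Z]=M(1-1/M)^n$; Chebyshev then gives $\pr{Z\ge1}\ge1-1/\E[Z]$. Choosing $M\asymp n/\log n$ keeps $\E[Z]$ large while $1/M\asymp\log(n)/n$, and optimising the constant yields $\frac19\frac{\log n}{n}$.

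\textbf{Upper bound via VC dimension.}
For the upper bound I would pass to the VC dimension of the range space $(\SSS^d,\mathcal S_d)$. A cap $C(x,t)$ is the trace on the sphere of an affine half-space of $\R^{d+1}$, and half-spaces in $\R^{d+1}$ have VC dimension $d+2$, so by Sauer--Shelah the shatter function of caps obeys $\Pi_{\mathrm{caps}}(m)\le(em/(d+2))^{d+2}$; since each slice is an intersection of two caps, $\Pi_{\mathcal S_d}(m)\le\Pi_{\mathrm{caps}}(m)^2\le(em/(d+2))^{2(d+2)}$, so $\mathcal S_d$ has VC dimension $O(d)$. I would then apply a Vapnik--Chervonenkis / Haussler--Welzl $\varepsilon$-net bound: the probability that some slice of measure $>\varepsilon$ avoids all of $\mathcal P_n$ --- equivalently $\disp(\mathcal P_n;d)>\varepsilon$ --- is at most $2\,\Pi_{\mathcal S_d}(2n)\,2^{-\varepsilon n/2}$.

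\textbf{From the tail to the expectation.}
The expectation follows by integrating this tail:
\[
\E[\disp(\mathcal P_n;d)]=\int_0^1\pr{\disp(\mathcal P_n;d)>\varepsilon}\dif\varepsilon\le\varepsilon_0+\int_{\varepsilon_0}^\infty 2\,\Pi_{\mathcal S_d}(2n)\,2^{-\varepsilon n/2}\dif\varepsilon=\varepsilon_0+\frac{2}{n\log2},
\]
where $\varepsilon_0=\frac{2}{n\log2}\log\!\big(2\,\Pi_{\mathcal S_d}(2n)\big)$ is the place where the bound crosses $1$. Inserting $\Pi_{\mathcal S_d}(2n)\le(2en/(d+2))^{2(d+2)}$ produces a bound of the shape $\frac{Cd}{n}\log(en/(cd))$; the hypothesis $n\ge32d$ is what guarantees the logarithm's argument stays above a fixed constant so that the lower-order terms can be absorbed, and honest constant bookkeeping turns this into $\frac{64}{\log2}\frac{d}{n}\log(en/(32d))$.

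\textbf{Main obstacle.}
The main obstacle is entirely on the upper-bound side and is quantitative rather than conceptual: to reach the stated clean form one must (i) produce an explicit, non-asymptotic VC/shatter estimate for the two-fold intersection family $\mathcal S_d$ with good constants, and (ii) propagate these constants faithfully through the symmetrisation step behind the $\varepsilon$-net inequality and through the tail integral, so that precisely the threshold $n\ge32d$ and the factor $64/\log2$ emerge. The lower bound is conceptually the subtler step --- seeing that equal-measure parallel zones are legitimate members of $\mathcal S_d$ is exactly what eliminates any dimensional loss --- but is technically routine once the second-moment estimate for $Z$ is in place.
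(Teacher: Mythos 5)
Your proposal is correct and follows the same two-part architecture as the paper (a VC-type tail bound integrated over $t$ for the upper bound; a decomposition of $\SSS^d$ into equal-measure parallel zones plus a Chebyshev argument for the lower bound), but it swaps out the key lemma at both ends. For the upper bound, the paper does not square the shatter function: it invokes the Blumer--Ehrenfeucht--Haussler--Warmuth intersection lemma $d_{\widetilde{\mathcal B}}\le 4 d_{\mathcal B}\log_2 6$ to get $d_{\mathcal S_d}\le 4(d+2)\log_2 6\le 32d$ from $d_{\mathcal C_d}=d+2$, and then applies their tail bound $\mathbb P(\mathrm{disp}>t)\le (en/d_{\mathcal B})^{d_{\mathcal B}}2^{-tn/2}$ directly. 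Your route via $\Pi_{\mathcal S_d}(m)\le \Pi_{\mathrm{caps}}(m)^2$ is in fact sharper (an effective exponent of order $2(d+2)$ rather than $32d$), and by the monotonicity of $x\mapsto \frac{4x}{n}\log_2(en/x)$ you can always relax your better bound up to the stated one, so the exact constants $64/\log 2$ and $32d$ are recoverable; but this relaxation step and the constant bookkeeping are exactly what you leave undone, and the argument $en/(32d)$ inside the logarithm is an artifact of the paper's $32d$ bound rather than something your shatter-function estimate would produce natively. For the lower bound, the paper cites a coupon-collector lemma (Chebyshev applied to the covering time $\tau_\ell$, giving $\mathbb P[\tau_\ell>n]>1/2$ for $\ell=\lceil(1+e)n/\log n\rceil$), whereas you run a second-moment argument on the number $Z$ of empty zones using pairwise negative correlation; both are Chebyshev in disguise and both yield $\frac19\frac{\log n}{n}$, with yours being arguably more self-contained. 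The one substantive check you should add on your side is that $\E[Z]=M(1-1/M)^n$ is indeed bounded away from $1$ (in fact large) for your choice of $M$ and all $n\ge 32$, so that $1-1/\E[Z]\ge 1/2$; the paper's choice $\ell=\lceil(1+e)n/\log n\rceil$ is precisely tuned so that the corresponding condition $n\le(\sum_{j=1}^{\ell}j^{-1}-2)\ell$ of its coupon-collector lemma holds.
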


In contrast to the result in \cite{HKKR2019}, where the upper bound on the expected dispersion is deduced from a $\delta$-cover approach (exploiting \cite[Lemma~1, Theorem~1]{R2018}), our upper bound on the expected spherical dispersion is in terms of the VC-dimension of the set $\mathscr S_d$ and uses a result of Blumer, Ehrenfeucht, Haussler, and Warmuth \cite{BEHW}. The lower bound in Theorem~\ref{thm:expected spherical disp} follows from similar arguments as in \cite{HKKR2019} adapted to our spherical framework. 

\begin{rmkalpha}\label{rem: inverse thm b}
We shall reformulate the bounds on the expected spherical dispersion from Theorem \ref{thm:expected spherical disp} in terms of its inverse, which is defined for all $\varepsilon\in(0,1)$ and $d\in\N$ as
\[
\widetilde{N}(\varepsilon,d):= \min\Big\{ n\in\N\,:\,\E\big[{\rm disp}\big(\mathcal P_n;d\big)\big] \leq \varepsilon \Big\}.
\]
From \eqref{eq: min_smaller_expec} we know that $N(\varepsilon,d)\leq \widetilde{N}(\varepsilon,d)$ such that, together with Remark~\ref{rem:inverse thm a}, we obtain 
for all $\varepsilon\in(0,\frac{1}{9e})$,
\[
\max\left\{\frac{1}{\varepsilon}+d-2,\,\frac{1}{2\varepsilon}+2d-2,\,\frac{1}{9\varepsilon}\log\Big(\frac{1}{9\varepsilon}\Big)\right\}
 \leq \widetilde{N}(\varepsilon,d) \leq 96 \frac{d}{\varepsilon}\log\Big( \frac{96}{\varepsilon} \Big),
\] 
i.e., the dependence on the parameter $\varepsilon$ and on $d$ is (individually) optimal.
\end{rmkalpha}

\begin{rmkalpha}
	Exploiting again the fact that $N(\varepsilon,d)\leq \widetilde{N}(\varepsilon,d)$ yields to an upper bound on $N(\varepsilon,d)$. We obtain
	from Remark~\ref{rem:inverse thm a} and Remark~\ref{rem: inverse thm b} that
	\[
	\max\left\{\frac{1}{\varepsilon}+d-2,\,\frac{1}{2\varepsilon}+2d-2\right\}
	\leq	N(\varepsilon,d)
	\leq 96 \frac{d}{\varepsilon}\log\Big( \frac{96}{\varepsilon} \Big)
	\]
	for $\varepsilon\in (0,\frac{1}{9\mathrm{e}})$. It shows that the inverse minimal spherical dispersion grows linearly with respect to $d$ for fixed $\varepsilon$ and almost linearly in $\varepsilon^{-1}$ (up to the $\varepsilon$-dependent logarithmic factor in the upper bound) for fixed dimension $d$. 
\end{rmkalpha}

\vskip 5mm

The rest of the paper is organized as follows. Section \ref{sec:proofs} is dedicated to the proofs and split into several parts, dealing first with the expected spherical dispersion as presented in Theorem \ref{thm:expected spherical disp} and then with the lower bound on the minimal spherical dispersion presented in Theorem \ref{thm:minimal spherical dispersion}.

%
%
%

\section{Proofs of the main results}\label{sec:proofs}

We shall now present the proofs of our main results and start with the bounds presented in Theorem \ref{thm:expected spherical disp}. After that we go over to arguing for the lower bounds on the minimal dispersion presented in Theorem \ref{thm:minimal spherical dispersion}.

\subsection{The expected spherical dispersion}
In \cite{HKKR2019}, Hinrichs, Krieg, Kunsch, and Rudolf studied the expected dispersion for iid random points in the $d$-dimensional cube $[0,1]^d$. 
We consider a generalized setting:
assume that there is a probability space $(B,\Sigma,\mu)$, where $B$ is equipped with a metric and $\Sigma$ is the corresponding $\sigma$-algebra of Borel sets. Let $\mathcal{B}\subseteq \Sigma$ be a family of subsets of $B$ which we call set of test sets. Then, for 
$\mathcal{P}=\{x_1,\dots,x_n\} \subseteq B$ define the $(\mathcal{B},\mu)$-dispersion of $\mathcal{P}$ as 
\[
{\rm disp}_\mu(\mathcal{P},\mathcal{B}) 
:= \sup_{T\in\mathcal{B},\, T\cap \mathcal{P} = \emptyset} \mu(T) = \sup_{T\in\mathcal{B}} \mu(T) \mathbf{1}_{0}(\vert T\cap \mathcal{P} \vert).
\]
We restrict ourselves to scenarios where $\mathcal{B}$ is countable, such that the supremum within the dispersion is taken over a countable set, which leads to the measurability of the mapping $x_1,\dots,x_n \mapsto {\rm disp}_\mu(x_1,\dots,x_n;\mathcal{B})$. 
Given a probability space $(\Omega,\mathcal{A},\mathbb{P})$, for $n\in\mathbb{N}$ let $X_1,\dots,X_n$ be an iid sequence of random variables, with $X_i \colon \Omega\to B$, where each $X_i$ is distributed according to $\mu$.
With this we define $\mathcal{P}_n := \{X_1,\dots,X_n\}$ and the expected dispersion as
\[
	\mathbb{E}[{\rm disp}_\mu(\mathcal{P}_n;\mathcal{B})]
\]
In contrast to the $\delta$-cover approach of \cite{HKKR2019}, our results on the upper bound of the expected dispersion are based on the VC-dimension. 
Moreover, we adapt the statement of the lower bound of the expected dispersion from \cite{HKKR2019} to our generalized situation. The proof follows as in \cite{HKKR2019}, but for the convenience of the reader we provide it. 
Eventually applying the former estimates leads to the upper and lower bound on the expected spherical dispersion for iid random points on the sphere $\SSS^{d}$. Finally, we provide the justification for the bounds of the inverse of the expected dispersion.

\subsubsection{The upper bound of the expected dispersion, proof of the upper bound of Theorem~\ref{thm:expected spherical disp}}

We start with defining the VC-dimension $d_\mathcal{B}$ of $\mathcal{B}$. It is the cardinality of the largest subset $P$ of $B$ such that the set system $\{P\cap T\colon T\in\mathcal{B} \}$ contains all subsets of $P$. Having this we are able to state an auxiliary result of Blumer, Ehrenfeucht, Haussler, and Warmuth. It follows by virtue of \cite[Lemma~A2.1, Lemma~A2.2 and Proposition~A2.1(iii)]{BEHW}.
\begin{lemma}
	For any $t>0$ and any $n\geq d_{\mathcal{B}}$, we have
	\begin{equation} \label{eq: conc_Blumer_et_al}
		\mathbb{P} \left( \rm{disp}_\mu(\{X_1,\dots,X_n\},\mathcal{B}) > t \right)
		\leq \left(\frac{en}{d_\mathcal{B}}\right)^{d_\mathcal{B}} 2^{-tn/2}.
	\end{equation}
\end{lemma}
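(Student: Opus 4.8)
The plan is to recognize the event $\{{\rm disp}_\mu(\{X_1,\dots,X_n\},\mathcal{B})>t\}$ as the event that the random sample fails to be a \emph{$t$-net} for the range space $(B,\mathcal{B})$: by the definition of the dispersion, it occurs precisely when there exists a test set $T\in\mathcal{B}$ with $\mu(T)>t$ that is disjoint from $\{X_1,\dots,X_n\}$. Since $\mathcal{B}$ has VC-dimension $d_\mathcal{B}$, inequality \eqref{eq: conc_Blumer_et_al} is then exactly the $\varepsilon$-net estimate of Blumer, Ehrenfeucht, Haussler and Warmuth, and the three cited auxiliary results of \cite{BEHW} assemble into the claimed bound. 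I would present the mechanism behind those results as follows.

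First I would symmetrize by introducing a ghost sample $X_1',\dots,X_n'$, independent of and identically distributed to $X_1,\dots,X_n$. Writing $A$ for the event above and $B$ for the event that some $T\in\mathcal{B}$ with $\mu(T)>t$ is disjoint from $\{X_1,\dots,X_n\}$ while containing at least $tn/2$ of the ghost points, one shows $\mathbb{P}(A)\leq 2\,\mathbb{P}(B)$. Indeed, conditioning on a witnessing set $T$ produced by the first sample, the number of ghost points falling into $T$ is binomial with mean $n\mu(T)>tn$, hence exceeds $tn/2$ with probability at least $1/2$ whenever $tn$ is not too small; in the complementary small-$tn$ regime one has $(en/d_\mathcal{B})^{d_\mathcal{B}}2^{-tn/2}\geq 1$, so the asserted bound is vacuous and nothing is lost.

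Next I would condition on the combined multiset of $2n$ points and invoke the Sauer--Shelah lemma, bounding the number of distinct traces $T\cap\{X_1,\dots,X_n,X_1',\dots,X_n'\}$ over $T\in\mathcal{B}$ by $\sum_{i=0}^{d_\mathcal{B}}\binom{2n}{i}$. For each fixed trace, a random swapping of the pairs $(X_i,X_i')$ leaves the conditional law invariant, and the event defining $B$ forces at least $tn/2$ of the relevant points to land in the ghost half; since each lands there independently with probability $1/2$, the conditional probability is at most $2^{-tn/2}$. A union bound over the traces, followed by taking expectations over the combined sample, controls $\mathbb{P}(B)$, and combining this with the symmetrization step yields \eqref{eq: conc_Blumer_et_al}.

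The main obstacle I anticipate is the bookkeeping of constants, so that the combinatorial prefactor comes out precisely as $(en/d_\mathcal{B})^{d_\mathcal{B}}$ rather than the looser expression a naive count on $2n$ points would produce, and so that the factor lost in symmetrization is exactly the one absorbed in the vacuous regime. This is the delicate point that the precise formulations of \cite[Lemma~A2.1, Lemma~A2.2, Proposition~A2.1(iii)]{BEHW} are tailored to handle, and I would simply quote them to close the estimate in the stated form.
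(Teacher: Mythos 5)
Your proposal is correct and follows essentially the same route as the paper, which gives no argument beyond citing \cite[Lemma~A2.1, Lemma~A2.2, Proposition~A2.1(iii)]{BEHW}: you correctly identify the event $\{{\rm disp}_\mu(\{X_1,\dots,X_n\},\mathcal{B})>t\}$ as the failure of the sample to be a $t$-net, sketch the standard symmetrization/Sauer--Shelah/swapping mechanism behind the BEHW bound, and rightly defer to the precise formulations in \cite{BEHW} for the exact prefactor $(en/d_\mathcal{B})^{d_\mathcal{B}}$ and the absorption of the symmetrization factor.
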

With that estimate we provide the arguments for our upper bound of the expected dispersion.
\begin{proposition}
	\label{prop:exp_disp_bnd}
	Let $n\in\mathbb{N}$ with $n\geq d_{\mathcal{B}}$. Then
	\[
	\mathbb{E}[{\rm disp}_\mu(\mathcal{P}_n;\mathcal{B})] \leq \frac{4d_{\mathcal{B}}}{n} \log_2\left(\frac{e n}{d_\mathcal{B}}\right).
	\]
\end{proposition}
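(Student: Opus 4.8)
The plan is to convert the tail bound \eqref{eq: conc_Blumer_et_al} into a bound on the expectation via the layer-cake representation. Since $\mu$ is a probability measure, the random variable $Y := {\rm disp}_\mu(\mathcal{P}_n;\mathcal{B})$ takes values in $[0,1]$, so I would write $\mathbb{E}[Y] = \int_0^1 \mathbb{P}(Y > t)\,\mathrm{d}t$ and split the integral at a threshold $t_0$, using the trivial estimate $\mathbb{P}(Y>t)\le 1$ on $[0,t_0]$ and the concentration inequality on $[t_0,1]$.

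First I would choose $t_0$ to be exactly the value at which the right-hand side of \eqref{eq: conc_Blumer_et_al} equals $1$; solving $(en/d_\mathcal{B})^{d_\mathcal{B}}\,2^{-t_0 n/2} = 1$ gives $t_0 = \frac{2 d_\mathcal{B}}{n}\log_2\!\big(en/d_\mathcal{B}\big)$. This is what makes the argument efficient: below $t_0$ the tail bound is vacuous (it exceeds $1$) and above $t_0$ it decays, so the first piece of the integral contributes exactly $t_0$. For the tail piece, writing $C = (en/d_\mathcal{B})^{d_\mathcal{B}}$ and enlarging the range of integration to $[t_0,\infty)$, a direct computation gives $\int_{t_0}^{\infty} C\,2^{-tn/2}\,\mathrm{d}t = \frac{2}{n\ln 2}\,C\,2^{-t_0 n/2} = \frac{2}{n\ln 2}$, where the final equality uses the defining property of $t_0$. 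Combining the two pieces yields $\mathbb{E}[Y]\le \frac{2 d_\mathcal{B}}{n}\log_2\!\big(en/d_\mathcal{B}\big) + \frac{2}{n\ln 2}$.

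Finally, to reach the stated form I would absorb the additive term into the main one. Since $n\ge d_\mathcal{B}$ we have $en/d_\mathcal{B}\ge e$, hence $\log_2\!\big(en/d_\mathcal{B}\big)\ge 1/\ln 2$, and since the VC-dimension satisfies $d_\mathcal{B}\ge 1$ this gives $d_\mathcal{B}\log_2\!\big(en/d_\mathcal{B}\big)\ge 1/\ln 2$, i.e. $\frac{2}{n\ln 2}\le \frac{2 d_\mathcal{B}}{n}\log_2\!\big(en/d_\mathcal{B}\big)$. Adding this to the main term produces the claimed bound $\frac{4 d_\mathcal{B}}{n}\log_2\!\big(en/d_\mathcal{B}\big)$.

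I do not expect a serious obstacle here: the only genuine decisions are the optimal choice of the threshold $t_0$ (everything downstream is a routine integration) and the final comparison, which is the single place where the hypothesis $n\ge d_\mathcal{B}$ together with $d_\mathcal{B}\ge 1$ is used, and which one must handle carefully to land on the clean constant $4$.
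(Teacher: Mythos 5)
Your proof is correct and takes essentially the same route as the paper: the same layer-cake split at the threshold $t_0$ where the tail bound \eqref{eq: conc_Blumer_et_al} equals $1$, the same evaluation of the exponential tail integral giving $t_0 + \tfrac{2}{n\ln 2}$, and the same absorption of the additive term via $n\geq d_{\mathcal{B}}$ and $d_{\mathcal{B}}\geq 1$. The only cosmetic difference is that the paper sets the split point to $\min\{1,t_0\}$ to cover the case $t_0>1$ (which does occur, e.g.\ when $n=d_{\mathcal{B}}$); in that case the claim is immediate from ${\rm disp}_\mu(\mathcal{P}_n;\mathcal{B})\leq 1\leq t_0$, so your argument needs only that one-line remark to be complete.
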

\begin{proof}
Set $\gamma:= \gamma_{d_\mathcal{B},n} = \min\left\{1, \frac{2d_{\mathcal{B}}}{n} \log_2 \left( \frac{en}{d_{\mathcal{B}}}\right)\right\}$. With this we have
\begin{align*}
	\mathbb{E}[{\rm disp}_\mu(\mathcal{P}_n;\mathcal{B})]
	& = \int_0^1 \mathbb{P} \left( \rm{disp}_\mu(\{X_1,\dots,X_n\},\mathcal{B}) > t \right) {\rm d}t\stackrel{\eqref{eq: conc_Blumer_et_al}}{\leq} \gamma +  \int_\gamma^1 \left(\frac{en}{d_{\mathcal{B}}}\right)^{d_{\mathcal{B}}} 2^{-tn/2} {\rm d}t\\
	& = \gamma + \left(\frac{en}{d_{\mathcal{B}}}\right)^{d_{\mathcal{B}}} \frac{2}{n \log 2} \left[ \left(\frac{en}{d_{\mathcal{B}}}\right)^{-d_{\mathcal{B}}} -2^{-n/2}\right]\\
	& \leq  \gamma + \frac{2}{n\log 2} \leq \frac{4d_{\mathcal{B}}}{n} \log_2 \left(\frac{en}{d_{\mathcal{B}}}\right).
	\qedhere
\end{align*}
\end{proof}
Observing that the mapping 
\begin{equation}
\label{eq: monotonicity}
x\mapsto \frac{4x}{n} \log_2\left( \frac{en}{x} \right)
\end{equation} 
is increasing for $x\in [1,n]$ leads to the fact that an upper bound of the VC-dimension gives an upper bound of the expected dispersion via Proposition~\ref{prop:exp_disp_bnd}. Therefore, we state a further tool for the application of the former proposition to the spherical dispersion. For a proof of the next result we refer to the application of \cite[Lemma~3.2.3.]{BEHW}.
\begin{lemma} \label{lem:upp_VC_dim_intersect}
	Given a family of test sets $\mathcal{B}$ with VC-dimension $d_{\mathcal{B}}$ we have that the VC-dimension of the new set of test sets
	\[
		\widetilde{\mathcal{B}} 
		:= \{ D\cap E\colon D,E\in\mathcal{B} \}
	\]
	satisfies $d_{\widetilde{\mathcal{B}}} \leq 4d_{\mathcal{B}} \log_2 6$.
\end{lemma}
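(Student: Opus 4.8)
The plan is to translate the VC-dimension bound into a statement about the growth (shatter) function, control the growth function of the intersection class $\widetilde{\mathcal B}$ by that of $\mathcal B$, apply the Sauer--Shelah lemma, and finally invert the resulting inequality to read off the claimed bound on $d_{\widetilde{\mathcal B}}$. For a finite $P\subseteq B$ I write $\mathcal{B}|_P := \{P\cap T : T\in\mathcal B\}$ for the trace of $\mathcal B$ on $P$, and set $\Pi_{\mathcal B}(m) := \max\{|\mathcal{B}|_P| : |P|=m\}$. The key combinatorial observation is that traces commute with intersection: for $D,E\in\mathcal B$ we have $P\cap(D\cap E) = (P\cap D)\cap(P\cap E)$, so every element of $\widetilde{\mathcal B}|_P$ is determined by the pair $(P\cap D,\,P\cap E)\in \mathcal{B}|_P\times\mathcal{B}|_P$. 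This gives the doubling bound $\Pi_{\widetilde{\mathcal B}}(m)\le \Pi_{\mathcal B}(m)^2$.

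Next I would invoke the Sauer--Shelah lemma together with the standard estimate $\sum_{i=0}^{d}\binom{m}{i}\le (em/d)^{d}$ valid for $m\ge d\ge 1$. Writing $d:=d_{\mathcal B}$, this yields $\Pi_{\mathcal B}(m)\le (em/d)^{d}$ for all $m\ge d$, and hence, by the doubling bound,
\[
\Pi_{\widetilde{\mathcal B}}(m)\le \Big(\frac{em}{d}\Big)^{2d}\qquad (m\ge d).
\]
Now suppose $\widetilde{\mathcal B}$ shatters a set $P$ with $|P|=m\ge d$, so that $\Pi_{\widetilde{\mathcal B}}(m)\ge 2^{m}$. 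Combining the two estimates and taking logarithms base $2$ gives $m\le 2d\,\log_2(em/d)$, i.e.\ $g(m)\ge 0$, where $g(m):=2d\,\log_2(em/d)-m$.

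It remains to show that this forces $m< 4d\log_2 6$, and this inversion is the only genuine obstacle. I would handle it by a monotonicity argument: $g$ is concave in $m$ with derivative $g'(m)=\tfrac{2d}{m\ln 2}-1$, so it is strictly decreasing beyond its unique maximum at $m=2d/\ln 2$. Setting $m_0:=4d\log_2 6$, one checks $m_0>2d/\ln 2$ since $4\log_2 6>2/\ln 2$, so it suffices to verify $g(m_0)<0$ at this single point. But $g(m_0)<0$ is equivalent to $\log_2(e m_0/d)<2\log_2 6=\log_2 36$, i.e.\ to $4e\log_2 6<36$, i.e.\ to $e\log_2 6<9$, which holds because $e\log_2 6<2.72\cdot 2.59<9$. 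Since $g$ is decreasing past its maximum, $g(m)<0$ for every $m\ge m_0$, so no set of size $\ge m_0$ with $m\ge d$ can be shattered; sets of size $m<d$ trivially satisfy $m<d\le m_0$. Therefore every shattered set has fewer than $m_0$ points, which gives $d_{\widetilde{\mathcal B}}< 4d_{\mathcal B}\log_2 6$ and in particular the asserted inequality.
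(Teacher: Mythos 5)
Your proposal is correct. Note that the paper does not actually prove this lemma: it simply defers to \cite[Lemma~3.2.3]{BEHW}, which gives the bound $2ds\log_2(3s)$ for $s$-fold intersections and specializes to $4d_{\mathcal B}\log_2 6$ at $s=2$. What you have written is, in essence, the standard proof of that cited result for the case $s=2$: the trace/growth-function doubling bound $\Pi_{\widetilde{\mathcal B}}(m)\le \Pi_{\mathcal B}(m)^2$, the Sauer--Shelah estimate $\Pi_{\mathcal B}(m)\le (em/d)^d$, and an inversion of $2^m\le (em/d)^{2d}$. Your inversion step is carried out carefully and correctly: the function $g(m)=2d\log_2(em/d)-m$ is concave, its maximum is at $m=2d/\ln 2$, which lies below $m_0=4d\log_2 6$, and $g(m_0)<0$ reduces to $e\log_2 6<9$, which holds with room to spare; the trivial cases $m<d$ (and $d_{\mathcal B}=0$, where the trace class is a single set) cause no difficulty. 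So the proposal supplies a self-contained proof where the paper only gives a pointer, and the bound you obtain is even slightly stronger (strict inequality).
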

In the setting of the spherical dispersion, we have $B=\mathbb{S}^d$, $\mu=\pi_d$, and $\mathcal{B}=\mathcal{S}_d$. 
Note that the test set of spherical slices $\mathcal{S}_d$ consists of intersections of spherical caps of $\mathbb{S}^d$. Therefore, by Lemma~\ref{lem:upp_VC_dim_intersect}, for obtaining an upper bound of $d_{\mathcal{S}_d}$, it is sufficient to provide the VC-dimension of the set of test sets of spherical caps, which for $d\in\mathbb{N}$ is denoted by
\[
	\mathcal{C}_d:= \big\{ C(x,t)\colon x\in\mathbb{S}^d, t\in [-1,1] \big\}.
\]
In the works \cite[Proposition~5.12]{bilyk2015random} and \cite[Proposition~42]{S2019} it is shown that $d_{\mathcal{C}_d} = d+2$. Here recall that $d$ is the ``classical dimension'', that is, $\mathbb{S}^d\subseteq\mathbb{R}^{d+1}$. Now we have all auxiliary results to state and prove our upper bound on the expected spherical dispersion.
\begin{proposition}
 Let $n,d \in\mathbb{N}$ with $n\geq 32 d$. Assume that $X_1,\dots,X_n$ are independent random points chosen uniformly distributed from $\mathbb{S}^d$ with respect to $\pi_d$. Then
 \[
 	\mathbb{E}\left[ {\rm disp}(\mathcal{P}_n;d) \right]
 	\leq \frac{64}{\log 2}\cdot  \frac{d}{n} \log \left(\frac{en}{32d}\right).
 \]
\end{proposition}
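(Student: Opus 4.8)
The plan is to specialize the abstract machinery just developed—Proposition~\ref{prop:exp_disp_bnd} together with the monotonicity recorded in \eqref{eq: monotonicity}—to the concrete probability space $(\mathbb{S}^d,\pi_d)$ with test family $\mathcal{B}=\mathcal{S}_d$. Since Proposition~\ref{prop:exp_disp_bnd} already delivers an estimate of exactly the shape $\tfrac{4 d_{\mathcal{B}}}{n}\log_2(\tfrac{en}{d_{\mathcal{B}}})$, the only genuinely new input required is a good upper bound on the VC-dimension $d_{\mathcal{S}_d}$ in terms of $d$; everything after that is substitution and constant bookkeeping.

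First I would bound $d_{\mathcal{S}_d}$. By definition every spherical slice is the intersection of two spherical caps, so $\mathcal{S}_d=\{D\cap E:D,E\in\mathcal{C}_d\}$, and Lemma~\ref{lem:upp_VC_dim_intersect} applies with $\mathcal{B}=\mathcal{C}_d$. Feeding in the known value $d_{\mathcal{C}_d}=d+2$ gives $d_{\mathcal{S}_d}\le 4(d+2)\log_2 6$. I would then collapse this into a clean multiple of $d$: using $d+2\le 3d$ for $d\ge 1$ and the numerical value $4\log_2 6<32/3$, one obtains $d_{\mathcal{S}_d}\le 32 d$ for every $d\ge 1$ (and the sharper $d_{\mathcal{S}_d}\le 16 d$ once $d$ is not too small). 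The crude bound $d_{\mathcal{S}_d}\le 32 d$ is exactly what guarantees $d_{\mathcal{S}_d}\le n$ under the standing hypothesis $n\ge 32 d$.

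With $d_{\mathcal{S}_d}\le n$ secured, Proposition~\ref{prop:exp_disp_bnd} yields $\mathbb{E}[\mathrm{disp}(\mathcal{P}_n;d)]\le \tfrac{4 d_{\mathcal{S}_d}}{n}\log_2(\tfrac{en}{d_{\mathcal{S}_d}})$, and since the map in \eqref{eq: monotonicity} is increasing on $[1,n]$ I would replace $d_{\mathcal{S}_d}$ by its $d$-multiple upper bound, arriving at an expression of the form $\tfrac{C d}{n}\log_2(\tfrac{en}{c\,d})$. Converting $\log_2$ to the natural logarithm through $\log_2 x=\log x/\log 2$ then produces the stated prefactor $\tfrac{64}{\log 2}$ and the logarithmic argument $\tfrac{en}{32d}$. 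The one step that needs genuine care—and the main obstacle—is precisely this final constant chase: the factor $4\log_2 6$ must be tracked accurately so that the VC bound simplifies to the advertised constants rather than a larger multiple, and one must explicitly verify $d_{\mathcal{S}_d}\le n$ so that both Proposition~\ref{prop:exp_disp_bnd} and the monotonicity substitution are legitimate, which is the single place where the assumption $n\ge 32 d$ is essential.
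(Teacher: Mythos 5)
Your argument is exactly the paper's: bound $d_{\mathcal{S}_d}\le 4(d+2)\log_2 6\le 32d$ via Lemma~\ref{lem:upp_VC_dim_intersect} and $d_{\mathcal{C}_d}=d+2$, then feed this into Proposition~\ref{prop:exp_disp_bnd} together with the monotonicity \eqref{eq: monotonicity} and the hypothesis $n\ge 32d$. One caveat on the ``constant chase'' you rightly single out as the delicate step: substituting $x=32d$ into $\frac{4x}{n}\log_2\left(\frac{en}{x}\right)$ yields the prefactor $\frac{128}{\log 2}$ rather than $\frac{64}{\log 2}$, so the computation as you describe it gives twice the stated constant --- but this factor-of-two discrepancy is already present in the paper's own (equally terse) proof, so you are not introducing a new error, merely inheriting one.
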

\begin{proof}
	The proof follows essentially as already indicated. From the VC-dimension of $\mathcal{C}_d$ mentioned before, we obtain $d_{\mathcal{S}_d} \leq 32 d$. Using the monotonicity from \eqref{eq: monotonicity} and considering only $n\geq 32 d$, leads by Proposition~\ref{prop:exp_disp_bnd} to the claimed upper bound of the expected spherical dispersion.
\end{proof}

\subsubsection{The lower bound of the expected dispersion
}

The proof of this result follows as in \cite{HKKR2019} and is based on the coupon collector's problem. We elaborate on some of the details for the sake of completeness and convenience. 
All random variables shall be defined on the common probability space $(\Omega,\mathcal A,\Pro)$. 
Let us start by recalling an elementary result from \cite[Lemma 2.3]{HKKR2019}, which follows from Chebychev's inequality.
\begin{lemma}\label{lem:coupon chebychev}
Let $\ell\in\N$ and $(Y_i)_{i\in\N}$ be a sequence of independent random variables uniformly distributed on the set $\{1,\dots,\ell\}$. If we set
\[
\tau_\ell := \min\big\{k\in\N\,:\,\{Y_1,\dots,Y_k\}=\{1,\dots,\ell\} \big\},
\] 
then, for any natural number $n\leq \big(\sum_{j=1}^\ell j^{-1}-2\big)\ell$, we have
\[
\Pro[\tau_\ell > n]>\frac{1}{2}.
\]
\end{lemma}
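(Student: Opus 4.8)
The plan is to analyze the coupon collector random variable $\tau_\ell$ by controlling its mean and variance, then apply Chebyshev's inequality to show the stated probability bound. The key observation is that $\tau_\ell$ decomposes naturally as a sum of independent geometric waiting times. Specifically, writing $\tau_\ell = \sum_{j=1}^\ell T_j$, where $T_j$ is the number of additional draws needed to obtain the $j$-th \emph{new} coupon after $j-1$ distinct coupons have already been collected, each $T_j$ is geometrically distributed with success probability $p_j = (\ell-j+1)/\ell$, and the $T_j$ are mutually independent.

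From this decomposition I would first compute the mean. Since $\E[T_j] = 1/p_j = \ell/(\ell-j+1)$, summing over $j$ and reindexing gives $\E[\tau_\ell] = \ell \sum_{j=1}^\ell \frac{1}{j}$, the classical coupon collector expectation. Next I would bound the variance: because the $T_j$ are independent, $\Var(\tau_\ell) = \sum_{j=1}^\ell \Var(T_j)$, and for a geometric variable $\Var(T_j) = (1-p_j)/p_j^2 \leq 1/p_j^2 = \ell^2/(\ell-j+1)^2$. Summing yields $\Var(\tau_\ell) \leq \ell^2 \sum_{j=1}^\ell \frac{1}{j^2} \leq \ell^2 \cdot \frac{\pi^2}{6} \leq 2\ell^2$, using the standard bound $\sum_{j\geq 1} j^{-2} = \pi^2/6$.

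With these two estimates in hand, the final step is the Chebyshev argument. Fix $n \leq \big(\sum_{j=1}^\ell j^{-1} - 2\big)\ell$, so that $\E[\tau_\ell] - n \geq 2\ell$. Then
\begin{align*}
\Pro[\tau_\ell \leq n]
&\leq \Pro\big[\,\abs{\tau_\ell - \E[\tau_\ell]} \geq \E[\tau_\ell] - n\,\big] \\
&\leq \frac{\Var(\tau_\ell)}{(\E[\tau_\ell]-n)^2}
\leq \frac{2\ell^2}{(2\ell)^2} = \frac{1}{2}.
\end{align*}
Taking complements gives $\Pro[\tau_\ell > n] \geq 1/2$; to obtain the strict inequality claimed one checks that the variance bound is in fact strict (the $(1-p_j)$ factors are $<1$ for $j<\ell$), so the above chain has some slack and yields $\Pro[\tau_\ell > n] > 1/2$.

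I do not anticipate a serious obstacle here, as this is a textbook computation; the only point requiring mild care is making the constant $2$ in the hypothesis line up exactly with the variance bound $\sum_{j\geq 1} j^{-2} \leq 2$ so that the ratio comes out to precisely $1/2$, and verifying that the inequality is strict rather than merely non-strict. The cleanest route is to use $\sum_{j=1}^\ell j^{-2} < \pi^2/6 < 2$ to keep the strictness throughout, since this bound is strict for every finite $\ell$.
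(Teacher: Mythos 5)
Your proof is correct and is essentially the argument the paper relies on: the paper does not prove this lemma itself but recalls it from \cite{HKKR2019} as a consequence of Chebyshev's inequality, and the standard route there is exactly your decomposition of $\tau_\ell$ into independent geometric waiting times, the mean $\ell\sum_{j=1}^{\ell}j^{-1}$, the variance bound $\ell^2\sum_{j=1}^{\ell}j^{-2}<\ell^2\pi^2/6<2\ell^2$, and Chebyshev applied with deviation $\E[\tau_\ell]-n\geq 2\ell$. Your handling of the strict inequality via $\pi^2/6<2$ is also sound.
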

From this bound on the upper tail of $\tau_\ell$, we can deduce under a `decomposition condition' a lower bound on the expected $(\mathcal{B},\mu)$-dispersion for independent $\mu$-distributed points on $B$.
\begin{proposition}\label{prop:lower bound expected spherical disp}
Let $n,d\in\N$ and $\ell:= \lceil \frac{(1+e)n}{\log(n)} \rceil$, where $\lceil\cdot\rceil$ denotes the ceiling function, which maps $x\in\R$ to the least integer greater than or equal to $x$. Assume that there are pairwise disjoint test sets $S_1,\dots,S_{\ell} \in\mathcal{B}$ such that $B=\bigcup_{i=1}^{\ell} S_i$ and $\mu(S_j)=1/\ell$ for any $j=1,\dots,\ell$. Let $X_1,\dots,X_n$ be independent random points on $B$, where each $X_i$ is $\mu$-distributed. Then
\[
\E\big[{\rm disp}_\mu\big(\mathcal P_n;\mathcal{B}\big)\big] \geq \frac{1}{9}\,\frac{\log(n)}{n},
\]  
where $\mathcal P_n := \{X_1,\dots,X_n \}$.
\end{proposition}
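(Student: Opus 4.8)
The plan is to reduce the statement to the coupon collector bound of Lemma~\ref{lem:coupon chebychev} by coupling the sample $X_1,\dots,X_n$ to a uniform coupon process indexed by the partition $S_1,\dots,S_\ell$. Since the $S_j$ are pairwise disjoint with union $B$, each point $X_i$ lies in exactly one cell, so I would define index variables $Y_i$ by the requirement $X_i\in S_{Y_i}$. Because each $X_i$ is $\mu$-distributed and $\mu(S_j)=1/\ell$ for every $j$, the variables $Y_1,\dots,Y_n$ are independent and uniformly distributed on $\{1,\dots,\ell\}$, which is precisely the setting of Lemma~\ref{lem:coupon chebychev}.

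The key observation is that whenever the coupon collector time $\tau_\ell$ exceeds $n$, some index $j$ is missing from $\{Y_1,\dots,Y_n\}$, i.e.\ the cell $S_j$ contains none of the points $X_1,\dots,X_n$. As $S_j\in\mathcal{B}$ is an admissible test set with $S_j\cap\mathcal{P}_n=\emptyset$, the definition of the dispersion forces ${\rm disp}_\mu(\mathcal{P}_n;\mathcal{B})\geq\mu(S_j)=1/\ell$ on the event $\{\tau_\ell>n\}$. Since the dispersion is nonnegative, taking expectations and discarding the complementary event yields the one-line bound
\[
\E\big[{\rm disp}_\mu(\mathcal{P}_n;\mathcal{B})\big]\geq \frac{1}{\ell}\,\Pro[\tau_\ell>n].
\]

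It then remains to certify that the hypothesis of Lemma~\ref{lem:coupon chebychev} holds for the prescribed $\ell=\lceil (1+e)n/\log(n)\rceil$, namely $n\leq\big(\sum_{j=1}^\ell j^{-1}-2\big)\ell$; this is the technical crux. I would bound the harmonic sum by $\sum_{j=1}^\ell j^{-1}\geq\log\ell$ and use $\ell\geq (1+e)n/\log(n)$, so that $n/\ell\leq\log(n)/(1+e)$, reducing the required inequality to $\log\ell-2\geq\log(n)/(1+e)$. Substituting $\log\ell\geq\log(1+e)+\log(n)-\log\log(n)$ and rearranging leaves an elementary inequality of the shape $c_1\log(n)-\log\log(n)\geq c_2$ with $c_1=1-(1+e)^{-1}>0$, which holds in the relevant range (in particular for $n\geq 32$, covering the spherical application $n\geq 32d$); the constant $1+e$ is chosen precisely to guarantee this slack. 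With the hypothesis verified, Lemma~\ref{lem:coupon chebychev} gives $\Pro[\tau_\ell>n]>1/2$, hence $\E[{\rm disp}_\mu(\mathcal{P}_n;\mathcal{B})]\geq 1/(2\ell)$. Feeding in $\ell\leq (1+e)n/\log(n)+1$, the claimed bound $1/(2\ell)\geq \tfrac{1}{9}\log(n)/n$ reduces to the linear inequality $\tfrac{9}{2}n\geq(1+e)n+\log(n)$, valid for all $n\geq 1$ because $\tfrac{9}{2}-(1+e)>0$. The main obstacle is thus the harmonic-sum verification of the coupon collector hypothesis for the explicit $\ell$; the coupling and the final constant-chasing are routine.
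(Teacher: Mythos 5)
Your proposal is correct and follows essentially the same route as the paper: the same coupling of $X_i$ to uniform coupon indices $Y_i$, the same reduction $\E[{\rm disp}_\mu(\mathcal P_n;\mathcal B)]\geq \frac{1}{\ell}\Pro[\tau_\ell>n]$, the same verification of the coupon-collector hypothesis via the elementary inequality $\big(1-\tfrac{1}{1+e}\big)\log n-\log\log n\geq 2-\log(1+e)$ (which in fact holds for all $n\geq 2$, with the minimum $0$ attained at $x=\exp(1+1/e)$, so no restriction to $n\geq 32$ is needed), and the same final constant-chasing. No substantive differences.
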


\begin{proof}
We follow verbatim the proof in \cite{HKKR2019}, but include the details for the readers convenience. For $i\in\{1,\dots,n\}$, we define random variables $Y_i:\Omega\to\{1,\dots,\ell\}$ so that $Y_i(\omega)=j\in\{1,\dots,\ell\}$ if and only if $X_i(\omega)\in S_j$, i.e,
\[
Y_i := \sum_{j=1}^\ell j \mathbb 1_{\{X_i\in S_j\}}.
\]
This means that the $Y_i$'s are independent and uniformly distributed in $\{1,\dots,\ell\}$, and that the value of $Y_i$ indicates the test set from $S_1,\dots,S_\ell$ which the point $X_i$ falls in. Note that for any $\omega\in\Omega$ such that
\begin{equation}\label{eq:sets different}
\{Y_1(\omega),\dots,Y_n(\omega) \} \neq \{1,\dots,\ell \}
\end{equation}
(which means the left-hand side must be strictly contained in the right-hand side), there must exist at least one $r\in\{1,\dots,\ell \}$ such that $\{X_1(\omega),\dots,X_n(\omega) \} \cap S_r = \emptyset$. For those $\omega\in\Omega$ satisfying \eqref{eq:sets different}, we thus have
\[
{\rm disp}_\mu\big(\mathcal P_n(\omega);\mathcal{B}\big) \geq \frac{1}{\ell},
\]
where $\mathcal P_n(\omega):=\{X_1(\omega),\dots,X_n(\omega) \}$.
Therefore, on average, we obtain
\begin{equation}\label{eq:lower bound expectation}
\E\big[{\rm disp}_\mu\big(\mathcal P_n;\mathcal{B}\big)\big] = \int_{\Omega} {\rm disp}_\mu\big(\mathcal P_n(\omega);\mathcal{B}\big) \,\Pro(d\omega) \geq \frac{1}{\ell}\,\Pro\big[\{Y_1,\dots,Y_n \} \neq \{1,\dots,\ell \}\big] = \Pro[\tau_\ell>n],
\end{equation}
where $\tau_\ell$ is defined as in Lemma \ref{lem:coupon chebychev}. We now use that $\ell = \lceil \frac{(1+e)n}{\log(n)} \rceil $ and obtain
\[
\frac{n}{\ell} \leq \frac{\log(n)}{1+e} \leq \log\Big(\frac{(1+e)n}{\log(n)}\Big) - 2 \leq \log(\ell)-2 < \Big(\sum_{j=1}^\ell j^{-1}\Big) - 2,
\]
where it was used that $\sum_{j=1}^\ell j^{-1}>\log(\ell+1)$ and that for any $x>1$,
\[
\log\Big(\frac{(1+e)x}{\log(x)}\Big)-2-\frac{\log(x)}{1+e}\geq 0
\]
(where equality holds for $x=\exp(1+1/e)$). Therefore, $n\leq \big(\sum_{j=1}^\ell j^{-1}-2\big)\ell$, and so we can apply Lemma \ref{lem:coupon chebychev}, obtaining together with \eqref{eq:lower bound expectation} that
\[
\E\big[{\rm disp}_\mu\big(\mathcal P_n;\mathcal{B}\big)\big] \geq \Pro[\tau_\ell>n] >\frac{1}{2}.
\]
Altogether, this leads us to the estimate
\[
\E\big[{\rm disp}_\mu\big(\mathcal P_n;\mathcal{B}\big)\big] > \frac{1}{2\ell} \geq \frac{1}{2}\,\frac{\log(n)}{(1+e)n+\log(n)} > \frac{1}{9}\,\frac{\log(n)}{n},
\]
which completes the proof.
\end{proof}

\subsubsection{The inverse of the expected spherical dispersion -- Proof of Remark \ref{rem: inverse thm b}}

Let us consider the inverse of the minimal dispersion, which is, for every $\varepsilon\in(0,1)$ and $d\in\N$ as
\[
\widetilde{N}(\varepsilon,d):= \min\Big\{ n\in\N\,:\,\E\big[{\rm disp}\big(\mathcal P_n;d\big)\big] \leq \varepsilon \Big\}.
\]
Theorem \ref{thm:expected spherical disp} together with a simple computation (see \cite{HKKR2019}) show that whenever $\varepsilon\in (0,\frac{1}{9e})$, 
\[
	\widetilde{N}(\varepsilon,d) \geq \frac{1}{9\varepsilon}\log\Big(\frac{1}{9\varepsilon}\Big).
\]

For convenience of the reader we prove the following lemma which serves as tool to obtain the upper bound on the inverse of the expected spherical dispersion.
\begin{lemma}\label{lem:inverse dispersion bounds}
	Let $\varepsilon\in(0,1)$, $d\in\N$, and $c_1,c_2\in[1,\infty)$ be absolute constants. Define a differentiable function $f:(0,\infty)\to\R$ via $x\mapsto c_1 \frac{d}{x}\log\Big(c_2\frac{x}{d}\Big)$. 
	Then, for any real number $x\geq d\frac{a}{\varepsilon}\log\Big(\frac{a}{\varepsilon}\Big)$ with $a:=c_1(1+c_2/e)$,  we have  
	\[
	f(x) <\varepsilon.
	\] 
\end{lemma}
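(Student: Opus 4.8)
The plan is to collapse the two-parameter estimate to a one-variable problem and reduce the boundary case to the elementary bound $\log y \le y/e$.

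First I would substitute $v = x/d$, writing $f(x) = c_1\, g(v)$ with $g(v) = \log(c_2 v)/v$, and record the shape of $g$: since $g'(v) = (1-\log(c_2 v))/v^2$, the function increases on $(0,e/c_2)$, strictly decreases on $(e/c_2,\infty)$, and attains its global maximum $g(e/c_2)=c_2/e$. I would also note that $a = c_1(1+c_2/e) \ge 1+1/e > 1 > \varepsilon$, so that $L := \log(a/\varepsilon) > 0$ and the threshold $v_0 := (a/\varepsilon)L = x_0/d$ is well defined and positive.

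Next I would evaluate $f$ at the threshold. Plugging in gives $f(x_0) = c_1\, g(v_0) = \tfrac{c_1\varepsilon}{aL}\log(c_2 a L/\varepsilon)$; expanding $\log(c_2 a L/\varepsilon) = L + \log(c_2 L)$ and using $a/c_1 = 1+c_2/e$, the inequality $f(x_0)\le\varepsilon$ becomes equivalent to $\log(c_2 L) \le (c_2/e)L$, i.e.\ to $\log y \le y/e$ with $y = c_2 L$. Since this holds for every $y>0$, I obtain $f(x_0)\le\varepsilon$.

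It then remains to propagate this to all $x \ge x_0$. In the regime $v_0 \ge e/c_2$ --- which holds in the range of interest, in particular for small $\varepsilon$ --- the function $g$ is decreasing beyond $v_0$, so $f(x)\le f(x_0)\le\varepsilon$ for $x\ge x_0$, with strict inequality once $x>x_0$. The main obstacle is the complementary regime $v_0 < e/c_2$ (which can occur when $\varepsilon$ is close to $1$ and the constants are small), where $x_0$ lies before the peak of $g$ and monotonicity is unavailable; there I would instead use the crude bound $f \le c_1 c_2/e$ and show that $v_0 < e/c_2$ forces $a/\varepsilon < 1+e/c_2$, hence $c_1 c_2/e < \varepsilon$, which gives the strict inequality for every $x$. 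That implication is the contrapositive of the auxiliary estimate $c_2\, t\log t \ge e$ for $t \ge 1+e/c_2$, which I expect to be the most delicate computation. A final point to flag is strictness at the endpoint itself: the bound $\log y \le y/e$ is an equality precisely at $y = c_2 L = e$, so at the single point $x = x_0$ one in general only gets $f(x_0)\le\varepsilon$, the strict inequality being recovered for all $x>x_0$.
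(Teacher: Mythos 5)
Your proof follows essentially the same route as the paper's: substitute $v=x/d$, evaluate $f$ at the threshold $x_0=d\frac{a}{\varepsilon}\log\big(\frac{a}{\varepsilon}\big)$, and close the estimate with $\log y\le y/e$ applied to $y=c_2\log(a/\varepsilon)$ (the paper phrases this as the bound $g_{c_2}\le c_2/e$), then propagate by monotonicity of $f$ past its peak. The one place you go beyond the paper is the regime $v_0<e/c_2$: the paper simply asserts that the threshold lies in the decreasing range of $f$, which can in fact fail (e.g.\ $c_1=c_2=1$ and $\varepsilon$ close to $1$), and your crude bound $f\le c_1c_2/e$ together with the implication $v_0<e/c_2\Rightarrow c_1c_2/e<\varepsilon$ (which reduces to $(1+s)\log(1+s)>s$ and is correct) properly patches that case. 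Your observation about strictness at the endpoint is also accurate: the paper's own argument only yields $f(x_0)\le\varepsilon$ there, so your more careful accounting is, if anything, an improvement rather than a gap.
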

\begin{proof}
	First, we show that $f$ is decreasing for all $x>\frac{e d}{c_2}$ and, in particular, for any $x\geq a\frac{d}{\varepsilon}\log\Big(a\frac{\sqrt{d}}{\varepsilon}\Big)$. To see this, we first note that $f$ is differentiable and that, for all $x\in(0,\infty)$,
	\[
	f'(x) = c_1\frac{d}{x^2}\Bigg[1- \log\Big(c_2\frac{x}{d}\Big)\Bigg].
	\]
	The latter is obviously less than zero if and only if $1< \log\Big(c_2\frac{x}{d}\Big)$, which is equivalent to $x>\frac{ed}{c_2}$.
	
	For some $\beta\in(0,\infty)$, we define $g_\beta:(0,\infty)\to\R$ via $x\mapsto \frac{\log(\beta x)}{x}$. This function is again differentiable with $g_\beta'(x) = \frac{1}{x^2}(1-\log(\beta x))$ and satisfies
	\[
	g_\beta'(x)=0 \qquad\iff\qquad x=x_{\max}=\frac{e}{\beta}.
	\]
	In particular, $g_\beta(x_{\max})=\frac{\beta}{e}$. Hence, for each $x\in(0,\infty)$, $g_\beta(x) \leq g_\beta(x_{\max}) = \frac{\beta}{e}$.
	
	After this preparation, we shall now prove the desired estimate. For every $x\geq d\frac{a}{\varepsilon}\log\Big(\frac{a}{\varepsilon}\Big)$, due to the monotonicity of $f$,
	\begin{align*}
	f(x) & \leq \frac{c_1d}{a\frac{d}{\varepsilon}\log\Big(\frac{a}{\varepsilon}\Big)} \log\Bigg(c_2\frac{a}{\varepsilon} \log\Big(\frac{a}{\varepsilon}\Big)\Bigg) 
	= \frac{\varepsilon c_1}{a} \left[1+ \frac{\log\Bigg(c_2\log\Big(\frac{a}{\varepsilon}\Big)\Bigg)}{\log\Big(\frac{a}{\varepsilon}\Big)}\right] 
	= \frac{\varepsilon c_1}{a} \left[1+g_{c_2}\Big( \log\Big(\frac{a}{\varepsilon}\Big)\Big) \right],
	\end{align*}
	with the choice $\beta:=c_2$ above. Using the established bound on $g$, we obtain, for all $x\geq d\frac{a}{\varepsilon}\log\Big(\frac{a}{\varepsilon}\Big)$,
	\[
	f(x) \leq \frac{\varepsilon c_1}{a}[1+c_2/e ] \leq \varepsilon,
	\]
	where we used $a:=c_1(1+c_2/e)$ in the last estimate. 
\end{proof}

Using the upper bound of the expected dispersion from Theorem~\ref{thm:expected spherical disp} and eventually applying the previous lemma with $c_1=64/\log 2$, $c_2 = e/32$ 
leads to 
\[
\widetilde{N}(\varepsilon,d) \leq d \frac{a}{\varepsilon}\log\Big(\frac{a}{\varepsilon}\Big)
\]
with $a=\frac{64}{\log 2}(1+1/32)\leq 96$.

\subsection{Lower bound of the minimal spherical dispersion}

%

%
%

The general idea behind the lower bound exploits the following observation. Assume we are given a point set $\{ x_1,\dots, x_{d}\} \in \mathbb{S}^d$.
Then there exists a hyperplane $H$ containing the origin $0\in\R^{d+1}$ which supports all points, i.e., $\{x_1,\dots,x_{d}\}\in H$. Considering the half-space determined by this hyperplane intersected with the sphere $\mathbb{S}^d$ leads to a spherical slice (even a spherical cap) with $\pi_d$-volume of at least $1/2$. 

\subsubsection{Proof of Theorem~\ref{thm:minimal spherical dispersion}}

%

\begin{proof}
	Let $n\in\N$ and $\mathcal P_n:=\{x_1,\dots,x_n\}\subseteq \SSS^d$. For $n'\geq n$, consider the set $\mathcal P' :=\mathcal P_n \cup \{x_{n+1},\dots,x_{n'-1},x_{n'}\} $ with $x_{n+1},\dots,x_{n'}\in\SSS^d$. Then
	\begin{equation}
	\label{eq: low_bound}
	{\rm disp}\big(\mathcal P_n;d\big) \geq {\rm disp}\big(\mathcal P';d\big),
	\end{equation}
	because the number of points in the primed point set obtained by this procedure is non-decreasing.

	We now distinguish four regimes:
	\vskip 2mm
	\textbf{1st case: $n\leq d+1$:}
	Since $n\leq d+1$, we may consider an expanded point set $\mathcal P':= \mathcal P_n\cup\{x_{n+1},\dots,x_{d+1} \}$, where $\{x_{n+1},\dots, x_{d+1} \}\in\SSS^d$. Then there exists a vector $\theta\in\SSS^d$ and a corresponding hyperplane $H_\theta:= \{y\in \R^{d+1}\,:\, \langle \theta,y \rangle =0 \}$ perpendicular to $\theta$ such that $\{x_1,\dots,x_d,0\} \subseteq H_\theta$. The half-space $H_\theta^{+}:=\big\{y\in\R^{d+1}\,:\, \langle \theta, y\rangle >0 \big\}$ and the half-spaces $H_\theta^{-}:=\big\{y\in\R^{d+1}\,:\, \langle \theta, y\rangle <0 \big\}$ do not contain any points of $\{x_1,\dots,x_d\}$ and the same holds for the corresponding spherical caps $S^+:= H_\theta^{+} \cap \SSS^{d}$ and $S^-:= H_\theta^{-} \cap \SSS^{d}$. Moreover, 
	either $S^+$ or $S^-$ contains the point $x_{d+1}$, so that we set $S_0\in\{S^+,S^-\}$ to be the spherical cap which does not contain $x_{d+1}$. Eventually, by the continuity of measure, $\pi_d(S_0)=\frac{1}{2}$ such that the definition of the spherical dispersion thus yields
	\[
	{\rm disp}\big(\mathcal P';d\big) = \frac{1}{2}.
	\] 
	Using this together with \eqref{eq: low_bound}, we obtain
	\[
	{\rm disp}\big(\mathcal P_n;d\big) \geq {\rm disp}\big(\mathcal P';d\big) =\frac{1}{2}.
	\]
	\vskip 2mm
	
	\textbf{2nd case: $n\geq d+1$:} We proceed similar as in the previous case. Namely, we use the hyperplane spanned by the points $\{x_1,\dots,x_d,0\}$. In other words, there is a vector $\theta$ such that the aforementioned hyperplane is given by $H_\theta := \{y\in\mathbb{R}^{d+1}\colon \langle \theta,y \rangle =0\}$. As in the previous case we have the two half-spaces $H_\theta^+$ and $H_\theta^-$ as well as the corresponding spherical caps $S^+$ and $S^-$ that do not contain any point of $\{x_1,\dots,x_d\}$. Recall that $\pi_d(S^+)=\pi_d(S^-)=1/2$. Now by the pigeonhole principle, we can choose $S\in\{S^+,S^-\}$, such that the spherical cap $S$ contains at most $\lfloor (n-d)/2 \rfloor$ points from $\{ x_{d+1},\dots,x_n \}$. Further, we decompose $S$ in $\lfloor (n-d)/2 \rfloor+1$ spherical slices (given as intersections of two spherical caps) of equal $\pi_d$-measure. Formally, we just choose $\theta^\perp\in S$ with $\langle \theta,\theta^{\perp} \rangle = 0$ (i.e., $\theta^{\perp}\in \SSS^{d}\cap H_\theta$) and now define a rotation in the plane spanned by $\theta$ and $\theta^{\perp}$; such a rotation by an angle $\alpha$ is given by the orthogonal matrix $A:=\id_{(d+1)\times(d+1)} + (\cos(\alpha)-1)[\theta\otimes \theta + \theta^{\perp}\otimes \theta^{\perp}]$, where $\id_{(d+1)\times(d+1)}$ denotes the $(d+1)\times(d+1)$ identity matrix. Then we take $\alpha:= \pi/\lfloor \frac{n-d}{2}\rfloor$ and perform successive rotations of $H_\theta$ at this angle, considering spherical slices obtained from the intersection of the spherical cap $S$ with the spherical cap obtained from intersecting $\SSS^{d}$ with the rotation of $H_\theta$. 
	Then, again by the pigeon hole principle, we find at least one of the slices, say $S_0\in\mathcal{S}_d$, with $S_0\cap \mathcal{P}_n = \emptyset$, such that
	\[
		\pi(S_0) \geq \frac{1}{2(\lfloor (n-d)/2 \rfloor+1)} \geq \frac{1}{n-d+2}.
	\]
	Thus, by the definition of the spherical dispersion we have
	\[
		{\rm disp}\big(\mathcal P_n;d\big) \geq \pi(S_0) \geq \frac{1}{n-d+2}.
	\]
		
		\textbf{3rd case: $d+1<n\leq 2d+1$:}
	As in the 1st case we create an expanded point set $\mathcal P'$ by adding points until we have exactly $2d+1$ and then we take \eqref{eq: low_bound} into account. Without loss of generality let us assume that the first $2d$ points do not lie on a single hyperplane through the origin, because this would lead us eventually back to the 1st case. Let $H_1$ be the hyperplane in $\R^{d+1}$ determined by the points $\{0, x_1,\dots,x_{d} \}$ and let $H_2$ be the hyperplane determined by $\{0,x_{d+1},\dots,x_{2d}\}$. Assume that $\theta_1,\theta_2\in\SSS^d$ are the respective normals to these hyperplanes and choose those vectors such that $\langle \theta_1,\theta_2 \rangle \geq 0$;  in particular we know that $|\langle \theta_1,\theta_2 \rangle | \neq 1$. Let us look at the corresponding upper and lower open half-spaces, i.e., for $i=1,2$ define 
	\begin{align*}
	H^{+}_{\theta_i} & := \big\{y\in\R^{d+1}\,:\, \langle \theta_i,y \rangle > 0 \big\}, 
	\quad \text{and} \quad
	H^{-}_{\theta_i}  := \big\{y\in\R^{d+1}\,:\, \langle \theta_i,y \rangle < 0 \big\}.
	\end{align*}
	Define 
	\begin{align*}
			S^{+,+} & := H_{\theta_1}^+ \cap H_{\theta_2}^+ \cap \SSS^d, \qquad 
			S^{+,-} := H_{\theta_1}^+ \cap H_{\theta_2}^- \cap \SSS^d,\\  
			S^{+,-} & := H_{\theta_1}^- \cap H_{\theta_2}^+ \cap \SSS^d, \qquad 
			S^{+,-} := H_{\theta_1}^- \cap H_{\theta_2}^- \cap \SSS^d.
	\end{align*}
One can easily see that the spherical slices $S^{+,+},S^{+,-},S^{+,-},S^{+,-}$ are pairwise disjoint and that for any $S\in\{S^{+,+},S^{+,-},S^{+,-},S^{+,-}\}$ we have $S\cap\{x_1,\dots,x_{2d}\} = \emptyset$.
Moreover
	\[
	\pi_d\Big(S^{+,-} \cup S^{+,+} \Big) = \pi_d\Big(S^{-,-} \cup S^{-,+} \Big) = \frac{1}{2},
	\]
such that 
\begin{itemize}
	\item either $\pi_d(S^{+,-}) \geq 1/4$ or $\pi_d(S^{+,+}) \geq 1/4$; and
	\item either $\pi_d(S^{-,-}) \geq 1/4$ or $\pi_d(S^{-,+}) \geq 1/4$.
\end{itemize}	
Hence, there is an $S_1\in \{S^{+,-},S^{+,+} \}$ and an $S_2\in \{S^{-,-},S^{-,+}\}$ with  $\pi_d$-measure greater than or equal to $1/4$. Since $S_1$ and $S_2$ are disjoint, we can choose $S_0\in\{S_1,S_2\}$ in such a way that it does not contain $x_{2d+1}$, i.e., $S\cap\mathcal{P}'=\emptyset$.
Therefore
	\[
	{\rm disp}\big(\mathcal P';d\big)=\sup_{S\in\mathcal S_d\atop{S \cap \mathcal P' =\emptyset}}\pi_d(S) \geq \pi_d(S_0)\geq \frac{1}{4},
	\]
which together with \eqref{eq: low_bound} yields
	\[
	{\rm disp}\big(\mathcal P_n;d\big) \geq {\rm disp}\big(\mathcal P';d\big) \geq \frac{1}{4}.
	\] 
	
	\vskip 2mm
	\textbf{4th case: $n>2d$:}
	Let $H_1$ and $H_2$ be as in the 3rd case. This gives rise to four different spherical slices that do not contain any of the points $\{x_1,\dots,x_{2d}\}$, where two of the slices, say $S_1,S_2\in\mathcal{S}_d$, have $\pi_d$-measure of at least $1/4$. Thus, by the pigeonhole principle
	we find an $S\in \{S_1,S_2\}$ that contains at most $\lfloor(n-2d)/2 \rfloor$ points of $\{x_{d+1},\dots,x_n\}$. Now we decompose $S$ into $\lfloor (n-2d)/2 \rfloor +1 $ spherical slices of equal $\pi_d$-measure (given as intersections of two spherical caps) and get by the pigeonhole principle that there must exist a test set $S_{0}\in\mathcal{S}_d$ with $S_{0}\cap \mathcal{P}_n=\emptyset$ such that 
	\[
	\pi_d(S_0) \geq \frac{1}{4}\cdot\frac{1}{\lfloor (n-2d)/2 \rfloor +1 } \geq \frac{1}{2n-4d+4}.
	\]
	Therefore, we have 
	\[
	{\rm disp}\big(\mathcal P_n;d\big)=\sup_{S\in\mathcal S_d\atop{S \cap \mathcal P_n =\emptyset}}\pi_d(S) \geq \pi_d(S_0)\geq \frac{1}{2n-4d+4}.
	\]
		\vskip 2mm
	Eventually taking the maximum over the lower bounds derived in the previous four cases, depending on the range where those are satisfied, yields the claimed estimate.
\end{proof}

%
%
%
%
%
%

\subsection*{Acknnowledgement}
JP is supported by the Austrian Science Fund (FWF) Projects P32405 ``Asymptotic geometric analysis and applications'' and Project F5508-N26, which is part of the Special Research Program ``Quasi-Monte Carlo Methods:  Theory and Applications''. DR is partially supported by DFG project 389483880. We thank Simon Breneis for comments on a previous version of this paper.

\bibliographystyle{plain}
\bibliography{dispersion}

\bigskip
\bigskip
	
	\medskip
	
	\small

%
%
	
	\noindent \textsc{Joscha Prochno:} Faculty of Computer Science and Mathematics,
			University of Passau, Innstra{\ss}e 33, 94032 Passau, Germany
	
	\noindent
	{\it E-mail:} \texttt{joscha.prochno@uni-passau.de}

	\medskip
		
		\noindent \textsc{Daniel Rudolf:} Faculty of Computer Science and Mathematics,
		University of Passau, Innstra{\ss}e 33, 94032 Passau, Germany
		
		\noindent
		{\it E-mail:} \texttt{daniel.rudolf@uni-passau.de}

\end{document}